\newtheorem{theorem}{Theorem} \rm
\newtheorem{lemma}[theorem]{Lemma}
\newtheorem{conjecture}[theorem]{Conjecture}
\newtheorem{problem}[theorem]{Problem}
\newtheorem{question}[theorem]{Question}
\theoremstyle{plain}
\title{The square of every subcubic planar graph without 4-cycles and 5-cycles is 7-choosable}
\author{Ligang Jin\thanks{School of Mathematical Sciences, Zhejiang Normal University, Yingbin Road 688, 321004 Jinhua, China.  E-mail: {\tt ligang.jin@zjnu.cn}}
\and~Yingli Kang\thanks{Department of Mathematics, Jinhua University of Vocational Technology, Western Haitang Road 888, 321017 Jinhua, China.   E-mail: {\tt ylk8mandy@126.com}}
\and~Seog-Jin Kim\thanks{Department of Mathematics Education, Konkuk University,
Korea.  E-mail: {\tt skim12@konkuk.ac.kr}}~\thanks{ Corresponding author}
}
\begin{document}

\maketitle

\begin{abstract}
The square of a graph $G$, denoted by $G^2$, has the
same vertex set as $G$ and has an edge between two vertices if the distance between
them in $G$ is at most $2$.  Thomassen (2018)
and independently, Hartke, Jahanbekam and Thomas (2016)
proved that $\chi(G^2) \leq 7$ if $G$ is a subcubic planar graph.  A natural question is whether $\chi_{\ell}(G^2) \leq 7$ or not if $G$ is a subcubic planar graph.
Recently, Kim and Lian (2024) proved that $\chi_{\ell}(G^2) \leq 7$ if $G$ is a subcubic planar graph of girth  at least 6.
In this paper, we prove that $\chi_{\ell}(G^2) \leq 7$ if
$G$ is a subcubic planar graph without 4-cycles and 5-cycles, which improves the result of Kim and Lian.
\end{abstract}

Key words: List chromatic number;  square of graph;  subcubic planar graph;  girth
\section{Introduction}

The {\em square} of a graph $G$, denoted by $G^2$, has the
same vertex set as $G$ and has an edge between two vertices if the distance between
them in $G$ is at most $2$.
We say that a graph $G$ is  {\em subcubic} if $\Delta(G) \leq 3$, where $\Delta(G)$ is the maximum degree in $G$.  The {\em girth} of a graph $G$, denoted by $g(G)$, is the size of the smallest cycle in $G$. A cycle of length $k$ is called a \emph{$k$-cycle}.
Let $\chi(G)$ denote the chromatic number of a graph $G$.

Given a graph $G$, finding the chromatic number of the square of $G$, $\chi(G^2)$, is a main research topic in the study of colorings of the square of graphs.
On coloring squares of graphs, one of the most famous problems is Wegner's conjecture, which states as follows.

\begin{conjecture}[\cite{Wegner}] \label{conj-Wegner}
Let $G$ be a planar graph. The chromatic number
$\chi(G^2)$ of $G^2$ is at most $7$ if $\Delta(G) = 3$,
at most $\Delta(G)+5$ if $4 \leq \Delta(G) \leq 7$, and at most $\lfloor \frac{3 \Delta(G)}{2} \rfloor$ if $\Delta(G) \geq 8$.
\end{conjecture}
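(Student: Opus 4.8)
The plan is to argue by contradiction using the discharging method, handling the three degree regimes in a single framework wherever possible. Fix $\Delta = \Delta(G)$ and let $k$ denote the target bound: $k = 7$ when $\Delta = 3$, $k = \Delta + 5$ when $4 \le \Delta \le 7$, and $k = \lfloor 3\Delta/2 \rfloor$ when $\Delta \ge 8$. Suppose the statement fails and let $G$ be a planar counterexample with $\chi(G^2) > k$ minimizing $|V(G)| + |E(G)|$. Standard reductions let us assume $G$ is connected and $2$-connected, so that every face is bounded by a cycle and Euler's formula applies cleanly. The goal is to show such a minimal $G$ cannot exist: first by ruling out a catalogue of local \emph{reducible} configurations, then by deriving a global contradiction from Euler's formula.

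The reducibility step rests on the fact that a proper coloring of $G^2$ is exactly an assignment of colors in which any two vertices at distance at most $2$ in $G$ receive distinct colors; hence an uncolored vertex $v$ is forbidden at most $\sum_{u \sim v} d(u) \le d(v)\,\Delta$ colors by its already-colored neighbourhood. First I would enumerate configurations---low-degree vertices adjacent to low-degree vertices, short induced paths, and sparse second neighbourhoods---in which this count stays below $k$, so that a coloring of a smaller graph obtained by deleting a vertex (or contracting a carefully chosen edge) always extends. The delicate point is that deleting a vertex of $G$ alters distances in $G^2$, so each reduction must be chosen to preserve the distance-$2$ structure away from the modified region; this is exactly where planarity and the precise value of $k$ enter.

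With a list of forbidden configurations in hand, I would assign each vertex $v$ the charge $d(v) - 6$ and each face $f$ the charge $2\,d(f) - 6$, so that by Euler's formula the total charge is $-12$. I would then design discharging rules that move charge from large faces and high-degree vertices toward the low-degree vertices and short faces whose frequency is limited by the reducibility lemmas. The absence of the reducible configurations forces enough local sparsity---quantified vertex by vertex and face by face---that after discharging every vertex and every face carries nonnegative charge, contradicting the total of $-12$. Matching the discharging rules to the three regimes is the technical heart of the argument, since each regime has its own extremal configurations that the rules must accommodate exactly.

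The main obstacle, and the reason the full conjecture is still open, is the regime $\Delta \ge 8$, where the target $\lfloor 3\Delta/2 \rfloor$ must be hit on the nose: the extremal planar constructions force the bound to be precisely this floor, yet the strongest known general results attain only $\tfrac{3}{2}\Delta\,(1+o(1))$ colors, so the lower-order term stubbornly resists the discharging analysis. The subcubic case $\Delta = 3$ sits at the opposite extreme, where the absolute slack above the clique bound is tiny and the reducibility analysis is correspondingly intricate; that case has been settled by Thomassen and, independently, by Hartke, Jahanbekam and Thomas. A complete proof would therefore hinge on a discharging scheme delicate enough to pin down the exact constant uniformly in $\Delta$. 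In the absence of such a scheme, the present paper pursues a more tractable refinement in the subcubic setting, establishing the list-coloring bound $\chi_\ell(G^2) \le 7$ for subcubic planar graphs without $4$-cycles and $5$-cycles.
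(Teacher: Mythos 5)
The statement you were asked about is Wegner's conjecture (Conjecture~\ref{conj-Wegner}), which the paper does not prove and explicitly describes as still widely open; it is cited only as motivation, with just the $\Delta(G)=3$ case known (Thomassen~\cite{Thomassen}; Hartke, Jahanbekam and Thomas~\cite{Hartke}, the latter via extensive computer case-checking). Your proposal is therefore not a proof but a proof \emph{template}: the two components you correctly identify as essential---the catalogue of reducible configurations and the discharging rules matched to each degree regime---are never produced, and no such scheme is known to exist for any $\Delta \ge 4$. Your own final paragraph concedes exactly this. Concretely, the reducibility step fails at the first hurdle: your counting bound says an uncolored vertex $v$ is forbidden at most $\sum_{u \sim v} d(u)$ colors, but this quantity generically exceeds $k$ (already for $\Delta = 3$ a $3$-vertex can see $9$ forbidden colors against $k = 7$), so the promised enumeration of configurations ``in which this count stays below $k$'' covers essentially nothing beyond vertices of very low degree, and the genuinely hard structures are untouched. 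The best known general bounds, $\bigl\lceil \frac{5\Delta}{3} \bigr\rceil + 78$ (Molloy--Salavatipour~\cite{MS05}) and $2\Delta + 7$ for $6 \le \Delta \le 31$ (\cite{BDMP21}), together with the asymptotic result of Havet et al.~\cite{Havet}, show how far current discharging and probabilistic technology is from the exact constant $\lfloor 3\Delta/2 \rfloor$, which extremal constructions force on the nose.

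Two smaller points also deserve flagging. First, your reduction to a $2$-connected minimal counterexample is not as ``standard'' as claimed for squares of graphs: deleting a vertex or splitting at a cut vertex changes distance-$2$ adjacencies across the cut, so colorings of the pieces need not merge, and this is precisely the kind of difficulty that makes even the reducibility lemmas of the present paper (e.g.\ Lemma~\ref{basic-lemma}(c) on $2$-vertices not being cut vertices) require genuine arguments. Second, the charge assignment you propose ($d(v)-6$ on vertices, $2d(f)-6$ on faces) does sum to $-12$ by Euler's formula, but choosing a valid charge is the trivial part; everything rests on rules whose verification requires the missing configuration list. In short: the approach you sketch is the right general paradigm---it is the one used in the paper for its much weaker Theorem~\ref{main-thm}---but as a proof of Conjecture~\ref{conj-Wegner} it has an unfillable gap at its center, and the statement remains a conjecture.
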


Conjecture \ref{conj-Wegner} has received a lot of attention, but it is still widely open.  The only case for which we know tight bound is when $\Delta(G) = 3$. Thomassen \cite{Thomassen} showed that $\chi(G^2) \leq 7$ if $G$ is a planar graph with $\Delta(G)  = 3$, that is, Conjecture \ref{conj-Wegner} is true for $\Delta(G)  = 3$. Conjecture \ref{conj-Wegner} for $\Delta(G)  = 3$ was also confirmed by
Hartke, Jahanbekam and Thomas \cite{Hartke} with extensive computer case-checking.

With conditions on $\Delta(G)$, several upper bounds of $\chi(G^2)$ were obtained.
Bousquet, Deschamps, Meyer and Pierron \cite{BDMP} showed that $\chi(G^2) \leq 12$ if $G$ is a planar graph with $\Delta(G) \leq 4$.
For $\Delta(G) \leq 5$, recently the upper bounds on $\chi(G^2)$ were improved in \cite{Aoki, Deniz, Zou}, and now the best known upper bound was obtained in \cite{Deniz}.
Deniz \cite{Deniz} showed that $\chi(G^2) \leq 16$ if $G$ is a planar graph with $\Delta(G) \leq 5$.  Moreover, Bousquet, Deschamps, Meyer and Pierron \cite{BDMP21} showed that $\chi(G^2) \leq 2 \Delta(G) + 7$ if $G$ is a planar graph with
$6 \leq \Delta(G) \leq 31$.
 The best known upper bound for general $\Delta(G)$ is that $\chi(G^2) \leq \lceil \frac{5 \Delta(G)}{3}\rceil + 78$ by Molloy and Salavatipour \cite{MS05}.  On the other hand, Havet, van den Heuvel, McDiarmid, and Reed \cite{Havet} proved that
Conjecture \ref{conj-Wegner} holds asymptotically.
One may see  a detailed story on the study of Wegner's conjecture in \cite{Cranston22}.

A list assignment for a graph is a function $L$ that assigns to each vertex a list of
available colors. A graph is {\em $L$-colorable} if it has a proper coloring $f$ such that
$f (v) \in L(v)$ for all vertices $v$. A graph is called {\em $k$-choosable} if it is $L$-colorable whenever
all lists of $L$ have size $k$. The \emph{list chromatic number} $\chi_{\ell}(G)$ is the minimum $k$ such that $G$
is $k$-choosable.

Since it was known in \cite{Thomassen} that $\chi(G^2) \leq 7$ if $G$ is a subcubic planar graph, the following natural question was raised in \cite{CK} and \cite {Havet}, independently.

\begin{question}[\cite{CK,Havet}] \label{CK-question}
Is it true that  $\chi_{\ell} (G^2) \leq 7$ if $G$ is a subcubic planar graph?
\end{question}

Question \ref{CK-question} was motivated by the List Square Coloring Conjecture which states that
$\chi_{\ell}(G^2) = \chi(G^2)$ for every graph $G$.
However, the List Square Coloring Conjecture was disproved in \cite{KP}.
Note that a positive result for the List Square Coloring Conjecture for a special class of graphs is still interesting.
It was conjectured in \cite{Havet} that $\chi_{\ell}(G^2) = \chi(G^2)$ if $G$ is a planar graph.  But, recently Hasanvand \cite{MH} proved that there exists a cubic claw-free planar
graph $G$ such that $\chi(G^2) = 4 < \chi_{\ell}(G^2)$.

\medskip
So, it would be interesting to answer Question \ref{CK-question}.
But, considering the result of Thomassen in \cite{Thomassen}, it would be not easy to answer Question \ref{CK-question} completely if it is true. One natural approach to Question \ref{CK-question} is to forbid cycles of certain lengths.
For general upper bound on $\chi_{\ell}(G^2)$ for a subcubic graph $G$,
Cranston and Kim \cite{CK} proved that $\chi_{\ell} (G^2) \leq 8$ if $G$ is a connected graph (not necessarily planar) with $\Delta(G) = 3$ and if $G$ is not the Petersen graph.  Cranston and Kim  \cite{CK} also proved that $\chi_{\ell} (G^2) \leq 7$ if $G$ is a  subcubic planar graph with $g(G) \geq 7$.  Recently, Kim and Lian \cite{KL24} made an interesting progress by showing the following result.

\begin{theorem}[\cite{KL24}] \label{thm-KL}
If $G$ is a subcubic planar graph with girth at least 6,  then $\chi_{\ell}(G^2) \leq 7$.
\end{theorem}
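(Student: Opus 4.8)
The plan is to argue by contradiction via the discharging method. Suppose the statement fails, and let $G$ be a connected counterexample minimizing $|V(G)|$; fix a list assignment $L$ with $|L(v)|=7$ for every $v$ for which $G^2$ is not $L$-colorable. Since deleting a vertex from a subcubic planar graph of girth at least $6$ yields another such graph (girth can only increase), minimality lets me assume that the square of $G-S$ is $L$-colorable for every nonempty $S\subseteq V(G)$. The first observation is that low-degree vertices of $G$ have small degree in $G^2$: a vertex $v$ of degree $1$ satisfies $\deg_{G^2}(v)\le 3$, and a vertex of degree $2$ with neighbours $u,w$ satisfies $\deg_{G^2}(v)\le 6$, since each of $u,w$ contributes at most two further vertices. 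Thus such a $v$ sees at most $6<7$ colours and can in principle be coloured last; the subtlety is that deleting a $2$-vertex $v$ destroys the distance-two edge $uw$ of $G^2$, and the girth hypothesis forces $u,w$ to be nonadjacent in $(G-v)^2$ (an edge $uw$ would give a triangle $uvw$ and a common neighbour a $4$-cycle), so any extension must also guarantee $c(u)\ne c(w)$.

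First I would build a catalogue of reducible configurations. The degree-$1$ case is immediate. For the remaining configurations the scheme is always the same: delete a small set $S$, invoke minimality to $L$-colour $(G-S)^2$, and then extend the colouring to $S$. The extension is the crux. For each $v\in S$ one counts the colours forbidden by its already-coloured $G^2$-neighbours together with the distance-two constraints lost in the deletion; when these never exhaust the list one finishes greedily, and when they might, one sets up a bipartite \emph{list-matching} between the uncoloured vertices and the colours and verifies Hall's condition. Here the girth hypothesis is essential: because $G$ has no $3$-, $4$- or $5$-cycle, the second neighbourhoods around $S$ are locally tree-like, so the $G^2$-neighbourhoods of distinct vertices of $S$ overlap in a controlled way and the counts stay within budget. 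I expect the catalogue to consist of a $1$-vertex, of short \emph{threads} of $2$-vertices together with their end-neighbours, and of certain clusters of $3$-vertices joined to $2$-vertices; the delicate point is choosing $S$ large enough to absorb the freed distance-two constraints, rather than deleting a single $2$-vertex, which the first paragraph shows need not be reducible on its own.

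Next I would run a discharging argument to show these configurations cannot all be avoided. Assign to each vertex $x$ the charge $\mu(x)=\deg_G(x)-4$ and to each face $f$ the charge $\mu(f)=\ell(f)-4$, where $\ell(f)$ is the length of $f$; by Euler's formula $\sum_x \mu(x)+\sum_f \mu(f)=-8$. Girth at least $6$ forces $\ell(f)\ge 6$, so every face starts with charge at least $2$, while $2$-vertices and $3$-vertices carry the deficits $-2$ and $-1$. I would then move charge from faces to their incident low-degree vertices by rules calibrated so that, once the reducible configurations are forbidden, each $2$- and $3$-vertex ends with nonnegative charge and each face retains nonnegative charge. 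The excluded configurations are exactly what guarantees that deficient vertices are spread thinly enough along each face for its initial surplus to cover them, making the total charge nonnegative and contradicting the value $-8$.

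The main obstacle is the reducibility of the configurations dominated by degree-$3$ vertices. For such a vertex $\deg_{G^2}(v)$ can be as large as $9$, well above the list size $7$, so it cannot simply be coloured last; instead one must colour an entire cluster simultaneously and show that a system of distinct representatives for the lists exists. In the list setting the usual parity- or symmetry-based shortcuts are unavailable, so verifying Hall's condition for these clusters \emph{uniformly over all $7$-lists} is where the real work lies; tuning the discharging weights so that they are compatible with a manageably short catalogue of such clusters is the accompanying difficulty.
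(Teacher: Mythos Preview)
This theorem is not proved in the present paper; it is quoted from Kim and Lian \cite{KL24} as prior work, and the paper instead proves the stronger Theorem~\ref{main-thm}. So there is no proof here to compare your attempt against directly.

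Judged on its own, your proposal is a strategy outline rather than a proof. You name no specific reducible configuration, no specific discharging rule, and you explicitly flag the extendability of degree-$3$ clusters (``verifying Hall's condition \dots\ uniformly over all $7$-lists'') as an unresolved obstacle. Until a concrete catalogue of configurations is written down and each is shown extendable, and until rules are stated and checked face by face, nothing has been proved. Two points would tighten the plan. First, your initial charge $\deg(x)-4$ gives every $3$-vertex a deficit of $-1$, so faces must subsidise \emph{all} vertices; the standard choice for subcubic graphs (used both in \cite{KL24} and in this paper's proof of Theorem~\ref{main-thm}) is $2\deg(x)-6$ on vertices and $\ell(f)-6$ on faces, which makes $3$-vertices neutral and leaves only $2$-vertices to be repaid---a far cleaner bookkeeping. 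Second, neither \cite{KL24} nor this paper handles the coloring extension by a general SDR/Hall argument; both rely on a short explicit list of forbidden local patterns (bounds on distances between $2$-vertices, between short cycles, between a short cycle and a $2$-vertex, etc.) whose extendability is verified by carefully chosen greedy orders with a handful of case splits. Your plan is in the right genre, but the substance---the actual configurations and the actual rules---is still missing.
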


\medskip

In this paper, we consider subcubic planar graphs that have no  4-cycles and  5-cycles, and
we improve Theorem \ref{thm-KL} by showing the following main theorem.

\begin{theorem} \label{main-thm}
If $G$ is a subcubic planar graph without 4-cycles and 5-cycles, then $\chi_{\ell}(G^2) \leq 7$.
\end{theorem}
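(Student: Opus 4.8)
The plan is to argue by contradiction via the discharging method. Suppose the theorem fails, and let $G$ be a counterexample minimizing $|V(G)|+|E(G)|$, together with a list assignment $L$ on $V(G^2)=V(G)$ with all lists of size $7$ for which $G^2$ admits no proper $L$-coloring. Standard first reductions show that $G$ is connected (distinct components of $G$ lie at infinite distance, so their squares are colored independently, and some component would already be a smaller counterexample) and that $G$ has minimum degree at least $2$: a vertex $u$ of degree at most $1$ satisfies $\deg_{G^2}(u)\le 3$, so after $L$-coloring $(G-u)^2$ by minimality we color $u$ greedily. Throughout I will call a vertex of degree $2$ a $2$-vertex and a vertex of degree $3$ a $3$-vertex, and I record the basic inequality $\deg_{G^2}(v)\le 2+(d(x)-1)+(d(y)-1)\le 6$ for a $2$-vertex $v$ with neighbors $x,y$; hence a $2$-vertex can always be colored after all of its neighbors. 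The genuine obstructions are therefore the $3$-vertices whose square-degree is $7$, $8$, or $9$.

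The reducibility engine is deletion-and-extension. To show that a configuration is reducible I delete a small vertex set $S$; since deleting vertices cannot create new cycles, $G-S$ remains subcubic, planar, and free of $4$- and $5$-cycles, so by minimality $(G-S)^2$ has a proper $L$-coloring. The one subtlety is that $(G-S)^2$ can miss edges of $G^2$ between two vertices whose only common neighbor lies in $S$; I will choose each $S$ so that these lost constraints are either vacuous or can be restored by adding a bounded number of edges to $G-S$ while staying inside the graph class and applying minimality to that smaller graph (this is exactly where the no-$4$-cycle/no-$5$-cycle hypothesis must be checked, to be sure the augmentation creates no forbidden short cycle). To extend the coloring to $S$, I delete from each list the colors already used on colored $G^2$-neighbors and then color the vertices of $S$ by a greedy / degree-choosability argument: the computation to verify in each configuration is that every $v\in S$ retains more available colors than it has uncolored $G^2$-neighbors inside $S$. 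Using the square-degree bounds above, this should rule out, among others, two adjacent $2$-vertices and several configurations in which $2$-vertices and full-square-degree $3$-vertices cluster together.

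For the global contradiction I use Euler's formula with initial charges $\mu(v)=d(v)-4$ for vertices and $\mu(f)=\ell(f)-4$ for faces, so that $\sum_{v}\mu(v)+\sum_{f}\mu(f)=-8$. The hypothesis forbids $4$-faces and $5$-faces, so every face has length $3$ or at least $6$; the only objects of positive charge are faces of length at least $6$, whereas $2$-vertices, $3$-vertices, and $3$-faces all start negative. I will design discharging rules that move charge from long faces to their incident vertices and to neighboring $3$-faces, and then show, using the reducible configurations to bound how many needy vertices and triangles can crowd each long face, that every vertex and every face ends with nonnegative final charge, contradicting the negative total.

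The main obstacle is precisely what makes this stronger than Theorem \ref{thm-KL}: the presence of triangles. In the girth-at-least-$6$ setting there are no $3$-faces and every face already carries nonnegative charge, so discharging is comparatively gentle; here each triangle is a charge-$(-1)$ face that must be subsidized by nearby long faces at the same time as those faces pay off their incident $2$- and $3$-vertices. The delicate work is therefore twofold: proving a list of reducible configurations rich enough to limit the local density of triangles and of full-square-degree $3$-vertices around each long face, and calibrating the discharging rules so that a single long face can simultaneously cover its incident needy vertices and its adjacent $3$-faces. I expect the balance between these competing demands, triangles versus $2$-vertices, to be the crux of the argument, with the reducibility lemmas handling $2$-vertices adjacent to triangles the most technical step.
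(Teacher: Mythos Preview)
Your plan is essentially the paper's: minimal counterexample, a short list of reducible configurations proved by delete-and-extend, and discharging via Euler's formula, with the crux being that long faces must pay for both incident $2$-vertices and adjacent triangles. Two technical choices differ. First, the paper uses initial charges $2d(v)-6$ and $\ell(f)-6$ rather than your $d(v)-4$ and $\ell(f)-4$; this zeroes out all $3$-vertices from the start, so only $2$-vertices and $3$-faces are needy and the rules become simply ``each face of length $\ge 7$ sends $1$ to each incident $2$-vertex and each adjacent $3$-face.'' Your scheme can be made to work too, but every $3$-vertex begins at $-1$ and must be fed by its incident faces, which forces extra rules and case analysis you can avoid. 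Second, the edge-augmentation device you describe is never needed: in each configuration the paper deletes, every removed vertex already has at least two neighbours inside the configuration, hence at most one neighbour outside, so no distance-$2$ constraint between surviving vertices is lost; the absence of $4$- and $5$-cycles is used only to ensure that distances computed inside the configuration agree with distances in $G$. The reducible configurations that drive everything are: no $6$-cycle adjacent to a $3$-cycle, any two $3$-cycles at distance $\ge 3$, and any $3$-cycle and $2$-vertex at distance $\ge 4$ (together with the cited facts that $2$-vertices are pairwise at distance $\ge 4$ and no $6$-cycle contains a $2$-vertex); these combine to give at most $\lfloor \ell/4\rfloor$ needy objects on any face of length $\ell\ge 7$, which is exactly what the discharging needs.
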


\bigskip

In Section \ref{sec-proof}, we will prove several reducible configurations and prove Theorem \ref{main-thm}.


\section{Proof of Theorem \ref{main-thm}} \label{sec-proof}
In this section, let $G$ be a minimal counterexample to Theorem \ref{main-thm}.
It means that for any proper subgraph $H$ of $G$, $\chi_{\ell}(H^2) \leq 7$, but $\chi_{\ell}(G^2) > 7$.
Let $L$ be a list assignment of $G$ of size 7 such that $G^2$ is not $L$-colorable.  For a vertex $x$, $d_G(x)$ is the \emph{degree} of $x$ in $G$.  A vertex of degree $k$ is called a \emph{$k$-vertex}.
For vertices $x$ and $y$ in $G$, $d_G(x, y)$ is the \emph{distance} between $x$ and $y$ in $G$.

First, we state some basic properties of $G$, which were stated in previous papers and can be proved similarly as in \cite{KL24}.

\begin{lemma} \label{basic-lemma}
$G$ satisfies the following properties.
\begin{enumerate}[(a)]
\item $G$ has no 1-vertex. (Lemma 8 in \cite{KL24})

\item The distance between any two 2-vertices in $G$ is at least 4. (Lemma 10 in \cite{KL24})

\item If $u$ is a 2-vertex in $G$, then $u$ is not a cut-vertex in $G$. (Lemma 11 in \cite{KL24})

\end{enumerate}
\end{lemma}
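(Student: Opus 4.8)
The three statements are all instances of the \emph{minimal counterexample} method: since every proper subgraph $H$ of $G$ satisfies $\chi_\ell(H^2)\le 7$, it suffices in each case to exhibit a proper subgraph whose $L$-coloring of its square can be upgraded to an $L$-coloring of $G^2$, contradicting the choice of $G$. The only quantitative input is that every list has size $7$, together with the elementary degree bounds in the square: if $v$ is a $1$-vertex then $d_{G^2}(v)\le 3$, and if $v$ is a $2$-vertex then $d_{G^2}(v)\le 6$. The hypothesis that $G$ has neither a $4$-cycle nor a $5$-cycle is exactly what keeps these bounds clean: it forces the two neighbors of a $2$-vertex to have disjoint second neighborhoods (a common neighbor would create a $4$-cycle), so no accidental coincidences distort the counts, and it controls how distances may change when vertices are removed.

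For part (a), let $u$ be a $1$-vertex with neighbor $v$. A leaf lies on no shortest path between two other vertices, so deleting $u$ leaves all remaining pairwise distances unchanged; hence $(G-u)^2=G^2-u$ exactly. By minimality $(G-u)^2$ admits an $L$-coloring, and since $u$ has at most $3$ neighbors in $G^2$, at least $7-3=4$ colors of $L(u)$ remain available. Coloring $u$ with any of them produces an $L$-coloring of $G^2$, a contradiction.

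For part (b), suppose $x$ and $y$ are $2$-vertices with $d_G(x,y)\le 3$, and set $H=G-\{x,y\}$. By minimality $H^2$ has an $L$-coloring $\phi$. The one delicate point is that $H^2$ is merely a \emph{subgraph} of $G^2-\{x,y\}$: deleting the $2$-vertex $x$ can push the distance between its two neighbors above $2$, so the edge joining them in $G^2$ is missing in $H^2$, and similarly for $y$. One must therefore verify that $\phi$ (after locally recoloring one endpoint of each such pair if needed) is still proper on the few square-edges lost in passing from $G$ to $H$; the absence of $4$- and $5$-cycles is precisely what bounds the number of these pairs and keeps the reduced configuration inside the class. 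Once a proper partial $G^2$-coloring of $V(G)\setminus\{x,y\}$ is in hand, I would extend to $x$ and $y$: each is a $2$-vertex, hence has at most $6$ neighbors in $G^2$, so at least one available color; moreover, when $x$ and $y$ are themselves $G^2$-adjacent (the case $d_G(x,y)=2$) each has its partner still uncolored and therefore at least $2$ available colors, and an edge whose endpoints carry lists of size $\ge 2$ is always colorable. I expect this distance-change bookkeeping — certifying that the reduced graph still lies in the class and that the lost square-edges can be repaired — to be the main obstacle.

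For part (c), suppose the $2$-vertex $u$, with neighbors $a$ and $b$, is a cut-vertex, and let $C_a,C_b$ be the sides of $G-u$ containing $a$ and $b$. By part (a) each side has at least two vertices, so $G_a:=G[C_a\cup\{u,b\}]$ and $G_b:=G[C_b\cup\{u,a\}]$ are both proper subgraphs; since $u$ is the unique link between the sides, distances inside each $G_i$ coincide with those in $G$, whence $G_a^2$ and $G_b^2$ are induced subgraphs of $G^2$ that together cover every edge of $G^2$. In fact $\{u,a,b\}$ is a triangle in $G^2$ (the only cross-edge is $ab$, a distance-$2$ pair through $u$) shared by both, so $G^2$ is a clique-sum of $G_a^2$ and $G_b^2$ along this triangle. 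The plan is to $L$-color $G_a^2$ first, fixing the colors of $u,a,b$, and then to color the remaining vertices $C_b\setminus\{b\}$ as an extension of the chosen colors on $u$ and $b$, using that no vertex of $C_b\setminus\{b\}$ lies within distance $2$ of $C_a$ and the degree bound at the $2$-vertex $u$ to supply enough available colors. Because a list coloring cannot be freely permuted, reconciling the two stages across the shared vertices $\{u,a,b\}$ under the list constraints is the crux here.
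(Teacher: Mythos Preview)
The paper does not actually prove this lemma: it states the three parts, attributes each to a numbered lemma in \cite{KL24}, and says only that they ``can be proved similarly.'' So there is no in-paper argument to compare against; what follows is an assessment of your sketch on its own merits.

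Your proof of (a) is correct and complete.

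For (b) your outline has a real gap, and one sentence is off. Deleting \emph{both} $x$ and $y$ forces you to repair up to two lost square-edges (one through each deleted $2$-vertex) and you explicitly leave that repair undone. The claim that ``the absence of $4$- and $5$-cycles is precisely what bounds the number of these pairs and keeps the reduced configuration inside the class'' is not right: each deleted $2$-vertex contributes at most one lost square-edge simply because it has only two neighbours, and vertex deletion preserves the class regardless of any cycle hypothesis. The cycle condition plays no role here. A cleaner line (and the one typically used) is to delete only $u$, so that the \emph{unique} lost square-edge is $ab$ between the two neighbours of $u$; if $\phi(a)=\phi(b)$ you erase the colours along the short $u$--$v$ path, recolour the neighbour $a$ of $u$ lying on that path (the nearby $2$-vertex $v$ supplies exactly the one unit of slack needed in the count $|N_{G^2}(a)\setminus\{\text{erased}\}|\le 6$), and then greedily recolour back toward $v$ and finally $u$.

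For (c) your clique-sum picture is accurate --- $G^2$ is the union of $(G_a')^2$ and $(G_b')^2$ glued on the triangle $\{u,a,b\}$ --- but the step you flag as ``the crux'' is exactly the missing ingredient, and your sketch gives no mechanism for it. List colourings cannot be permuted, so there is no automatic way to make the two sides agree on three prescribed vertices. The standard fix is to change the reduction: delete $u$ and add the edge $ab$, obtaining a subcubic planar graph $G''$ with the same cycle spectrum (the new edge is a bridge, since $a$ and $b$ lie in different components of $G-u$) and with $(G'')^2\supseteq G^2-u$; an $L$-colouring of $(G'')^2$ then restricts to a proper $L$-colouring of $G^2-u$, after which $u$ (with at most six $G^2$-neighbours) is coloured greedily. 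This uses minimality over $|V|$ rather than over subgraphs, which is implicit in the usual ``minimal counterexample'' setup but is not literally the hypothesis the paper states; you should be explicit about which minimality you invoke.
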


Next, we state an important lemma which was proved in \cite{KL24}.

\begin{lemma} \label{lemma-KL} (Lemma 5 in \cite{KL24})
$G$ has no 6-cycle which contains a 2-vertex.
\end{lemma}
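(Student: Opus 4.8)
The plan is to argue by contradiction. Suppose $G$ contains a $6$-cycle $C = v_1 v_2 v_3 v_4 v_5 v_6 v_1$ in which $v_1$ is a $2$-vertex. First I would record the local structure: since along $C$ every vertex lies within distance $3$ of $v_1$, Lemma \ref{basic-lemma}(b) (any two $2$-vertices are at distance at least $4$) forces $v_1$ to be the \emph{only} $2$-vertex on $C$, so $v_2,\dots,v_6$ are all $3$-vertices. Write $a$ for the third neighbour of $v_2$ (besides $v_1,v_3$) and $b$ for the third neighbour of $v_6$ (besides $v_1,v_5$).

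The reduction is to delete the $2$-vertex. Let $H = G - v_1$; then $H$ is a proper subgraph of $G$, hence a subcubic planar graph without $4$- and $5$-cycles, so by minimality $H^2$ admits an $L$-colouring $\phi$. The key observation is that the only edge of $G^2$ among the vertices of $H$ that can fail to be an edge of $H^2$ is $v_2 v_6$: any such lost edge must join two neighbours of $v_1$, and $v_1$ has only the neighbours $v_2,v_6$. Thus $\phi$ is already a proper colouring of $G^2 - v_1$ \emph{provided} $\phi(v_2)\neq\phi(v_6)$. Moreover $v_1$ has at most $6$ neighbours in $G^2$, namely $v_2,v_6$ together with the at most two further neighbours of each of $v_2$ and $v_6$, so whenever $\phi(v_2)\neq\phi(v_6)$ we may extend $\phi$ to $v_1$ and contradict the choice of $G$.

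It remains to handle the case $\phi(v_2)=\phi(v_6)=c$. Here I would first note, using the absence of $4$-cycles, that $d_H(v_2,v_6)\geq 3$: a common neighbour $w\neq v_1$ of $v_2$ and $v_6$ would create the $4$-cycle $v_1 v_2 w v_6$. In particular $v_6$ is not a neighbour of $v_2$ in $H^2$. In $H$ the vertex $v_2$ has only the two neighbours $v_3,a$, so its degree in $H^2$ is at most $6$ and it has at least one free colour; the same holds for $v_6$. If either $v_2$ or $v_6$ has two free colours, I recolour it to a colour different from $c$, restoring $\phi(v_2)\neq\phi(v_6)$ while keeping $\phi$ proper on $H^2$, and then colour $v_1$ as above.

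The remaining, and genuinely hard, case is when $v_2$ and $v_6$ each have a \emph{unique} free colour and it equals the common colour $c$. This forces each of $v_2,v_6$ to have exactly six $H^2$-neighbours carrying six distinct colours; equivalently, $v_3,a,v_5,b$ are all $3$-vertices and the second neighbourhoods of $v_2$ and of $v_6$ are ``saturated''. I expect this tight configuration to be the main obstacle. To break it I would liberate a second colour at $v_2$ by recolouring one of its neighbours, for instance recolouring $v_3$ (or $a$) along a Kempe chain in $H^2$ using the colour $c$ and the colour of that neighbour, chosen so that $v_6$ is not recoloured. The crux is to guarantee that such a swap exists and terminates without reintroducing a conflict, and it is exactly here that the hypotheses of no $4$-cycles and no $5$-cycles do the work: they forbid the short coincidences among the vertices in the second neighbourhoods of $v_2$ and $v_6$ that would otherwise entangle the two Kempe components, and they rule out the degenerate adjacencies that keep every local recolouring tight. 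Once the swap is made we obtain $\phi(v_2)\neq\phi(v_6)$, extend $\phi$ to $v_1$, and reach the desired contradiction.
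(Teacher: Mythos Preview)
The paper does not actually prove this lemma; it quotes it verbatim from \cite{KL24} with the remark ``which was proved in \cite{KL24}''. So there is no in-paper proof to compare against. I can only evaluate your argument on its own merits.

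Your reduction (delete $v_1$, colour $H^2$, then try to repair the unique extra constraint $\phi(v_2)\neq\phi(v_6)$) is set up cleanly, and the first two stages are correct: the only $G^2$-edge inside $V(H)$ possibly lost in $H^2$ is $v_2v_6$; the degree of $v_2$ (and of $v_6$) in $H^2$ is at most~$6$; and if either has two admissible colours you can break the tie and finish.

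The gap is the final ``tight'' case. What you write there is not a proof but a hope: you propose a Kempe swap at $v_3$ or $a$ in $H^2$ and assert that the forbidden $4$- and $5$-cycles will prevent the two Kempe components from interacting. There is no mechanism given for this. Kempe chains live in $H^2$, a graph where $3$-vertices have degree up to~$9$; such a chain starting at $v_3$ can in principle reach any vertex of $H$ after very few steps, and nothing in the absence of short cycles in $G$ bounds the length or reach of a bichromatic path in $H^2$. Concretely, you would need that the $(\phi(v_3),c)$-component containing $v_3$ avoids every vertex of $N_{H^2}(v_6)\cup\{v_6\}$ coloured $\phi(v_3)$ or $c$, and you have given no reason why that should hold. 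The phrases ``I expect'' and ``the crux is to guarantee'' mark exactly the place where an argument is missing. As it stands, the tight case is unresolved and the proof is incomplete.

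If you want to salvage the approach, a more promising line (and closer in spirit to how the other reducibility lemmas in this paper are handled) is to delete a larger set than $\{v_1\}$---for instance all of $V(C)$ together with suitable pendant neighbours---so that the residual list sizes allow a direct greedy/Hall-type extension instead of an uncontrolled recolouring. That is the style of argument used in \cite{KL24}, and it avoids Kempe chains entirely.
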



\medskip

Now, we prove some important new lemmas.

\begin{lemma} \label{C3-V2}
$G$ has no 3-cycle that contains a 2-vertex.
\end{lemma}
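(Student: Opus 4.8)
The plan is to argue by the standard reducibility method: suppose to the contrary that $G$ contains a triangle carrying a $2$-vertex, delete that vertex, $L$-color the resulting smaller graph by minimality, and then show the single deleted vertex can always be colored, contradicting the choice of $G$.

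Concretely, let $x$ be a $2$-vertex lying on a $3$-cycle. Since $d_G(x)=2$, both edges at $x$ belong to the triangle, so the two neighbors of $x$, say $y$ and $z$, satisfy $yz\in E(G)$; this edge is the feature that makes the reduction go through. Set $H=G-x$. Then $H$ is a proper subgraph of $G$, and being a subgraph it is again subcubic, planar, and free of $4$-cycles and $5$-cycles, so minimality of $G$ yields a proper $L$-coloring $\phi$ of $H^2$.

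The crucial step is to check that $\phi$ is already a proper $L$-coloring of $G^2-x$, i.e. that deleting $x$ destroys no edge of $G^2$ among the surviving vertices. Two vertices $u,v\neq x$ can lose their $G^2$-adjacency only if every $u$--$v$ path of length at most $2$ passes through $x$; since an edge $uv$ is unaffected, this forces a length-$2$ path $u-x-v$ with $x$ interior, hence $u,v\in N_G(x)=\{y,z\}$. Thus the only pair possibly affected is $\{y,z\}$ itself --- but $yz\in E(G)$, so $y$ and $z$ stay adjacent in $H$ and therefore in $H^2$, and $\phi(y)\neq\phi(z)$. Consequently $\phi$ properly colors $G^2-x$. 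This is the only genuinely delicate point, and it is exactly why the triangle hypothesis, rather than merely that ``$x$ is a $2$-vertex,'' is required.

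It remains to extend $\phi$ to $x$. A $2$-vertex has $G^2$-degree at most $2+2+2=6$, which is one fewer than the list size $7$, so $L(x)$ contains a color unused on $N_{G^2}(x)$; assigning it to $x$ extends $\phi$ to a proper $L$-coloring of $G^2$, contradicting the assumption that $G^2$ is not $L$-colorable. If one wishes to pin down the neighborhood exactly, Lemma~\ref{basic-lemma}(b) forces $y$ and $z$ to be $3$-vertices (each is at distance $1<4$ from the $2$-vertex $x$), and the absence of $4$-cycles shows their remaining neighbors $y'$ and $z'$ are distinct and nonadjacent, giving $N_{G^2}(x)=\{y,z,y',z'\}$ and $\deg_{G^2}(x)=4$; either bound suffices to finish the argument.
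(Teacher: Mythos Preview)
Your proof is correct and follows essentially the same approach as the paper's: delete the $2$-vertex on the triangle, $L$-color the square of the smaller graph by minimality, and extend using the fact that the deleted vertex has at most four neighbors in $G^2$. You are more careful than the paper in explicitly verifying that the $H^2$-coloring is already proper on $G^2-x$ (precisely because $yz\in E(G)$), a point the paper leaves implicit.
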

\begin{proof}
Suppose that $G$ has a 3-cycle $uvwu$ with $d_G(w)=2$.
Let $H = G - w$.  By the minimality of $G$, the square of $H$ has a proper
$L$-coloring $\phi$.  Note that $|\{x : xw \in E(G^2)\}| \leq 4$, since $G$ is a subcubic graph.
So, we can color $w$ by a color $c \in L(w) \setminus \{\phi(x) : xw \in E(G^2)\}$.
This gives an $L$-coloring of $G^2$, a contradiction.
\end{proof}

\begin{lemma} \label{C3-C6}
$G$ has no 6-cycle which is adjacent to a 3-cycle.
\end{lemma}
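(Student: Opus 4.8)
The plan is to rule this configuration out of the minimal counterexample $G$ by a delete-and-extend argument. First I would normalize the geometry. Since $G$ is subcubic, two cycles that meet must actually share an edge (a common vertex alone would have degree at least $4$), and they cannot share two consecutive edges, because a shared path of length $2$ together with the remaining triangle edge closes a $5$-cycle, which $G$ forbids; hence the $6$-cycle and the $3$-cycle share exactly one edge. Write the $3$-cycle as $uvw$ and the $6$-cycle as $u\,v\,a\,b\,c\,d\,u$, sharing the edge $uv$. By Lemma \ref{C3-V2} all of $u,v,w$ are $3$-vertices, and by Lemma \ref{lemma-KL} all of $u,v,a,b,c,d$ are $3$-vertices, so $u,v,w,a,b,c,d$ are $3$-vertices. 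As $u$ is incident to $v,w,d$ and $v$ to $u,w,a$, the vertices $u$ and $v$ have no neighbour outside the configuration, while $w,a,b,c,d$ each have one further neighbour $w',a',b',c',d'$. Using the absence of $4$- and $5$-cycles I would record the relevant non-adjacencies (for example $w'$ is adjacent to none of $u,v,a,b,c,d$, since any such edge closes a $4$- or $5$-cycle), pinning down $N_{G^2}(u)=\{v,w,d,a,w',c,d'\}$ and $N_{G^2}(v)=\{u,w,a,d,w',b,a'\}$.

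Next I would delete the two endpoints of the shared edge. Put $H=G-\{u,v\}$; by minimality $H^2$ has a proper $L$-colouring $\phi$. The reason for deleting $u$ and $v$ together is that their $G^2$-neighbourhoods overlap heavily: both contain $\{w,a,d,w'\}$. Thus, among the vertices already coloured by $\phi$, the vertex $u$ sees only the six vertices $\{w,a,d,w',c,d'\}$ and $v$ sees only $\{w,a,d,w',b,a'\}$, so each of $u,v$ retains at least one admissible colour. It then remains to colour the edge $uv$ of $G^2$, and this succeeds as long as the two admissible sets are not one and the same single colour.

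The main obstacle is precisely this tight case, in which both admissible sets equal a common singleton $\{c_0\}$, forcing $\phi$ to realize six distinct colours on each of the two six-vertex sets above. To break it I would use the third triangle vertex $w$ as a buffer: in $H$ its only remaining neighbour is $w'$, so its only $H^2$-neighbours are $w',w_1',w_2'$, giving $w$ at least four interchangeable colours. Since $w$ is $G^2$-adjacent to both $u$ and $v$ but, in $H=G-\{u,v\}$, is non-adjacent to the remaining forbidden vertices $a,d,c,d'$, recolouring $w$ to a colour already used on one of those vertices keeps $\phi$ proper on $H^2$ while strictly enlarging the admissible set of $u$; one then colours $v$ and $u$ in turn, contradicting the non-$L$-colourability of $G^2$. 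The delicate point — the step I expect to need the full force of Lemmas \ref{basic-lemma}--\ref{C3-V2} together with the exclusion of $4$- and $5$-cycles — is to guarantee that such a recolouring of $w$ is available against an \emph{arbitrary} size-$7$ list assignment; controlling exactly which vertices coincide or are $G^2$-adjacent across the neighbourhoods of $u$, $v$ and $w$ is what makes the singleton case collapse.
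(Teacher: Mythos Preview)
Your deletion set $\{u,v\}$ is too small: the $L$-colouring $\phi$ of $(G-\{u,v\})^2$ that you invoke need not be a proper partial colouring of $G^2$. Concretely, $w$ is $G^2$-adjacent to $a$ (their common $G$-neighbour is $v$) and to $d$ (common neighbour $u$), yet in $H=G-\{u,v\}$ the only $H$-neighbour of $w$ is $w'$, and one checks using the forbidden $4$- and $5$-cycles that $w'\notin\{a,b,c,d,a',d'\}$, so $d_H(w,a)\ge 3$ and $d_H(w,d)\ge 3$. Hence $wa,wd\in E(G^2)\setminus E(H^2)$, and $\phi$ may well satisfy $\phi(w)=\phi(a)$ or $\phi(w)=\phi(d)$. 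In that situation you can certainly colour $u$ and $v$ (their forbidden sets shrink), but the resulting colouring is already improper on $G^2$ before you even touch $u$ and $v$. The same oversight breaks your singleton-case repair: you propose to recolour $w$ to a colour already used on one of $a,d,c,d'$, noting that this ``keeps $\phi$ proper on $H^2$'', but recolouring $w$ to $\phi(a)$ or $\phi(d)$ violates the $G^2$-edges $wa$, $wd$, so only the targets $\phi(c),\phi(d')$ are legitimate, and those two may both collide with $\phi(w_1'),\phi(w_2')$.

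The paper sidesteps this by deleting the entire seven-vertex configuration $\{v_1,\dots,v_7\}$; then no $G^2$-edge between surviving vertices is lost, and one is left with an extension problem on seven vertices with residual list sizes at least $5,3,2,2,3,5,4$, which they handle by a short case split on whether the lists at two non-adjacent vertices intersect. If you want to keep a small deletion set, you must at minimum add $w$ to it (so that no $G^2$-edge among the remaining vertices passes through a deleted vertex), but then $u,v,w$ form a $G^2$-triangle each with only two residual colours, and you will still need a saving argument comparable in strength to the paper's.
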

\begin{proof}
Suppose that $G$ has a 6-cycle which is adjacent to a 3-cycle.
Suppose that $G$ has
$H$ as a subgraph, and denote $V(H) = \{v_1, v_2, v_3, v_4, v_5, v_6, v_7\}$ (see Figure \ref{subgraph-H}).  Note that $\{v_1, v_2, v_3, v_4, v_5, v_6\}$ form a 6-cycle and $\{v_1,  v_6, v_7\}$
form a 3-cycle.

\begin{figure}[htbp]
\begin{center}
  \begin{tikzpicture}[u/.style={fill=black,minimum size =4pt,ellipse,inner sep=1pt},node distance=1.5cm,scale=0.8]
\node[u] (v1) at (150:2.1){};
\node[u] (v2) at (90:2.1){};
\node[u] (v3) at (30:2.1){};
\node[u] (v4) at (330:2.1){};
\node[u] (v5) at (270:2.1){};
\node[u] (v6) at (210:2.1){};
  \node[u] (v7) at (180:3.0) {};

\draw (v1) -- (v2);
\draw (v2) -- (v3);
\draw (v3) -- (v4);
\draw (v4) -- (v5);
\draw (v5) -- (v6);
\draw (v6) -- (v1);
\draw (v6) -- (v7);
\draw (v7) -- (v1);
\node[above=0.01cm] at (v1) {$v_1$};
   \node[above=0.01cm] at (v2) {$v_2$};
   \node[right=0.001] at (v3) {$v_3$};
  \node[below=0.01cm] at (v4) {$v_4$};
    \node[below=0.01cm] at (v5) {$v_5$};
  \node[below=0.01cm] at (v6) {$v_6$};
  \node[left=0.01cm] at (v7) {$v_7$};
\end{tikzpicture}
\caption{Subgraph $H$ } \label{subgraph-H}
\end{center}
\end{figure}
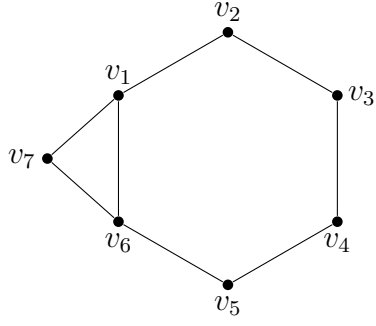

Let $L$ be a list assignment with lists of size 7 for each vertex in $G$.
We will show that $G^2$ has a proper coloring from the list $L$, which is a contradiction for the choice of $G$.

Let $H' = G - \{v_1, v_2, v_3, v_4, v_5, v_6, v_7\}$.  Then since $G$ is a minimal counterexample to Theorem \ref{main-thm}, the square of $H'$ has a proper coloring $\phi$ such that $\phi(v) \in L(v)$ for each vertex $v \in V(H')$.

For each $v_i \in V(H)$, we define \[
L_{H}(v_i) = L(v_i) \setminus \{\phi(x) : xv_i \in E(G^2) \mbox{ and } x \notin V(H)\}.
\]
Note that for any $x,y \in H$, if $d_H(x,y)\geq 3$, then $d_G(x,y)\geq 3$
since $G$ has neither 4-cycles nor 5-cycles.
Then, we have the following
\[|L_{H}(v_1)| \geq 5, |L_{H}(v_2)| \geq 3, |L_{H}(v_3)| \geq 2, |L_{H}(v_4)| \geq 2, |L_{H}(v_5)| \geq 3, |L_{H}(v_6)| \geq 5, |L_{H}(v_7)| \geq 4. \]

Next, we will show that
$H^2$ admits a proper coloring from the list $L_{H}$.

\medskip

\noindent {\bf Case 1}: $L_H (v_3) \cap L_H(v_7) \neq \emptyset$.

In this case, we color $v_3$ and $v_7$ by a color $c \in L_H (v_3) \cap L_H(v_7)$, and then greedily color $v_4, v_2, v_5, v_6, v_1$ in order.  Thus, $H^2$ has a proper coloring from the list.

\medskip

\noindent {\bf Case 2}: $L_H (v_3) \cap L_H(v_7) = \emptyset$.

In this case, we can color $v_2$ by a color $\alpha \in L_H(v_2)$ so that $|L_H(v_4) \setminus \{\alpha\}| \geq 2$.
Note that $\alpha \notin L_H(v_3)$ or $\alpha \notin L_H(v_7)$, since $L_H (v_3) \cap L_H(v_7) = \emptyset$.

\medskip
\noindent
{\bf Subcase 2.1}:  $\alpha \notin L_H(v_7)$.

Define \[
L'(v_i) = L_H(v_i) \setminus \{\alpha\} \mbox{ for } i = 1, 3, 4, 5, 6, 7.
\]
Then we have
\[|L'(v_1)| \geq 4, |L'(v_3)| \geq 1, |L'(v_4)| \geq 2, |L'(v_5)|  \geq 3, |L'(v_6)|  \geq 4,
|L'(v_7)|  \geq 4.\]
Now, greedily color $v_3, v_4, v_5, v_1, v_6, v_7$ in order. Thus, $H^2$ has a proper coloring from the list.

\medskip
\noindent
{\bf Subcase 2.2}:  $\alpha \in L_H(v_7)$ and $\alpha \notin L_H(v_3)$.

Define \[
L'(v_i) = L_H(v_i) \setminus \{\alpha\} \mbox{ for } i = 1, 3, 4, 5, 6, 7.
\]
Then we have
\[|L'(v_1)| \geq 4, |L'(v_3)| \geq 2, |L'(v_4)| \geq 2, |L'(v_5)|  \geq 3, |L'(v_6)|  \geq 4,
|L'(v_7)|  \geq 3.\]

Note that in this case, we may assume that $|L'(v_7)|  = 3$. Otherwise, $H^2$ has a proper coloring from the lists by the same argument as Subcase 2.1.  Then
there exists a color $\beta \in L'(v_1) \setminus L'(v_7)$ since $|L'(v_1)| >|L'(v_7)|$.

Now, color $v_3$ by a color $c_3 \in L'(v_3) \setminus \{\beta\}$, and then color $v_4$ and $v_5$
greedily.  Assume that $v_4$ is colored by $c_4$ and
$v_5$ is colored by $c_4$.  And define
\[
L''(v_1) = L'(v_1) \setminus \{c_3,c_5\},~L''(v_6) = L'(v_6) \setminus \{c_4,c_5\}, \text{~and~} L''(v_7) = L'(v_7) \setminus \{c_5\}.
\]
Note that $|L''(v_i)| \geq 2$ for $i = 1, 6, 7$.

If $c_5 = \beta$, then $|L''(v_7)| \geq 3$ since $c_5 = \beta \notin L'(v_7)$.  So, we can greedily color $v_1, v_6, v_7$ in order, which completes the coloring of $H^2$.

If $c_5 \neq \beta$, then $\beta\in L''(v_1)$. Thus, we can color $v_1$ with the color $\beta$ and then greedily color $v_6$ and $v_7$ in order, which completes the coloring of $H^2$.

By Case 1 and Case 2,
$H^2$ has a proper coloring from the list $L_H$.
Thus $G^2$ has a proper coloring from the list $L$, which is a contradiction. Hence, no 6-cycle is adjacent to a 3-cycle in $G$.  This completes the proof of Lemma \ref{C3-C6}.
\end{proof}

Next, we will show that the distance between any two 3-cycles is at least 3.

\begin{lemma} \label{lem-distance-cycle}
The distance between any two 3-cycles is at least 3 in $G$.
\end{lemma}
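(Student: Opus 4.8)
The plan is to assume for contradiction that $G$ contains two distinct $3$-cycles at distance at most $2$, and to exhibit a reducible configuration. By Lemma~\ref{C3-V2} the six vertices of the two triangles are all $3$-vertices, and the triangles must be disjoint: a shared vertex would have degree at least $4$, while a shared edge would create a $4$-cycle. A short case analysis shows the two triangles are joined in exactly one way: any second connection between them would close up a $4$-cycle, a $5$-cycle, or a $6$-cycle sharing an edge with a triangle, the last excluded by Lemma~\ref{C3-C6}. Hence either the triangles $abc$ and $def$ are joined by a single edge $cd$ (distance $1$), or they are joined by a length-two path through an external common neighbor $g$ of $c$ and $d$ (distance $2$). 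In each case I let $H$ denote the configuration, delete $V(H)$, and by the minimality of $G$ let $\phi$ be a proper $L$-coloring of the square of $G-V(H)$; I then pass to the reduced lists $L_H(v)=L(v)\setminus\{\phi(x):xv\in E(G^2),\ x\notin V(H)\}$, exactly as in the proof of Lemma~\ref{C3-C6}.

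Before coloring $H$ I would record that $G^2[V(H)]=H^2$. Indeed, since $G$ has no $4$- or $5$-cycle, any two configuration vertices at distance $3$ in $H$ remain at distance at least $3$ in $G$; the only pairs at $H$-distance $4$ arise in the distance-$2$ picture, between $\{a,b\}$ and $\{e,f\}$, and an external common neighbor of such a pair would create a $6$-cycle sharing an edge with a triangle, again contradicting Lemma~\ref{C3-C6}. Counting external $G^2$-neighbors then yields, in the distance-$1$ case, $|L_H(c)|,|L_H(d)|\ge 5$ and $|L_H(a)|,|L_H(b)|,|L_H(e)|,|L_H(f)|\ge 3$, with $H^2$ equal to two copies of $K_4$ sharing the edge $cd$. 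Here I would first color $c$ and $d$ with a pair $(\gamma_c,\gamma_d)$, $\gamma_c\ne\gamma_d$, drawn from their size-$5$ lists, and then color the two independent edges $ab$ and $ef$. A pair blocks $ab$ only when $L_H(a)=L_H(b)$ is a common $3$-set and $\{\gamma_c,\gamma_d\}$ lies inside it, giving at most $6$ bad pairs, and similarly at most $6$ bad pairs for $ef$; since at least $5\cdot5-5=20$ admissible pairs exist and $12<20$, a good pair survives and the coloring extends.

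In the distance-$2$ case the reduced lists satisfy $|L_H(c)|,|L_H(d)|,|L_H(g)|\ge 4$ and $|L_H(a)|,|L_H(b)|,|L_H(e)|,|L_H(f)|\ge 3$, and $H^2$ consists of two copies of $K_4$ sharing the single vertex $g$, together with the extra edge $cd$. The plan is to color $g$ first with a color $\gamma_g\in L_H(g)$; what remains is the far gentler graph formed by the two triangles $abc$ and $def$ joined by the edge $cd$, with outer lists of size $\ge 2$ and bridge lists $L_H(c)\setminus\{\gamma_g\}$, $L_H(d)\setminus\{\gamma_g\}$ of size $\ge 3$. Such a graph is colorable as soon as the bridge vertex $c$ can receive at least two different colors over all proper colorings of the triangle $abc$; the sole obstruction is that $c$ be pinned to one color, which forces $L_H(a)=L_H(b)$ to be a $3$-set with $\gamma_g$ inside it. As $|L_H(g)|\ge 4>3$, I would choose $\gamma_g\notin L_H(a)$, keeping at least two feasible colors at $c$; I then pick $d$'s color from the nonempty palette of the triangle $def$, pick $c$'s color to differ from it, and extend to $a,b$ and to $e,f$.

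The step I expect to be the main obstacle is the distance-$2$ case, and precisely the shared vertex $g$: in $H^2$ it has degree $6$ but a list of size only $4$, so a naive greedy ordering fails at the last vertex of each triangle. The decisive point is that the two sides of the configuration communicate only through $g$ and the single edge $cd$, so fixing $g$'s color decouples the two triangles; the remaining delicacy is to spend the freedom in $\gamma_g$ so that the enlarged bridge lists at $c$ and $d$ — of size $\ge 4$, thanks to Lemma~\ref{C3-V2} and the excluded short cycles — are not degenerated into a single forced color. Checking that the $L_H(a)=L_H(b)$ degeneracy can always be dodged using $|L_H(g)|\ge 4$ is the crux of the argument.
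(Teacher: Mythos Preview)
Your proof is correct and follows the same reducibility scheme as the paper: delete the two-triangle configuration, color the rest by minimality, compute the reduced lists, and extend. Your structural analysis is in fact more thorough than the paper's --- you explicitly invoke Lemma~\ref{C3-C6} to rule out external $G^2$-edges between $\{a,b\}$ and $\{e,f\}$ in the distance-$2$ picture, a point the paper's blanket ``since $G$ has neither $4$-cycles nor $5$-cycles'' remark does not literally cover.

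Where you diverge is in the extension step. The paper dispatches both cases with a single greedy trick: in the distance-$1$ case it colors the bridge vertex $c$ first with a color outside $L_H(f)$ (when $|L_H(f)|=3$), then greedily colors $a,b,d,e,f$ in that order; in the distance-$2$ case it colors $g$ first with a color outside $L_H(f)$ (when $|L_H(f)|=3$), then greedily colors $a,b,c,d,e,f$. Your bad-pair count in Case~1 and your palette analysis in Case~2 reach the same conclusion but with noticeably more machinery than necessary --- the paper's idea of spending the freedom at the \emph{first} colored vertex to protect the \emph{last} greedy vertex is shorter, avoids any case split on equal lists, and is a reusable pattern worth internalizing.
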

\begin{proof}

Suppose to the contrary that there are two 3-cycles whose distance is at most 2.
Note that any two 3-cycles are not adjacent in $G$, since $G$ has no 4-cycles.
Also any two 3-cycles are not intersecting in $G$, since $G$ is subcubic.

\medskip
\noindent {\bf Case 1}: Distance of two 3-cycles is 1.

Suppose that $G$ has $W_1$ in Figure \ref{subgraph-W} as a subgraph, and denote $V(W_1) = \{v_1, v_2, v_3, v_4, v_5, v_6\}$.
Note that the distance of the two 3-cycles $v_1 v_2 v_3 v_1$ and $v_4v_5v_6v_4$ is 1.

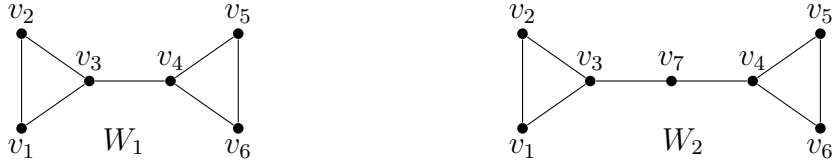
\begin{figure}[htbp]
\begin{center}
  \begin{tikzpicture}[u/.style={fill=black,minimum size =4pt,ellipse,inner sep=1pt},invisible/.style={circle,draw=none,fill=none,inner sep=0pt,minimum size=0pt},node distance=1.5cm,scale=0.9]

\node[u] (v1) at (0,0){};
\node[u] (v2) at (-1,0.7){};
\node[u] (v3) at (-1,-0.7){};
\node[u] (v5) at (1.2,0){};
\node[u] (v6) at (2.2,0.7) {};
\node[u] (v7) at (2.2,-0.7) {};

\draw  (v1) to (v2){};
\draw  (v1) to (v3){};
\draw  (v1) to (v5){};
\draw  (v2) to (v3){};
\draw  (v5) to (v6){};
\draw  (v5) to (v7){};
\draw  (v6) to (v7){};

\node[above] at (v1) {$v_3$};
\node[below] at (v3) {$v_1$};
\node[above] at (v2) {$v_2$};
\node[above] at (v5) {$v_4$};
\node[above] at (v6) {$v_5$};
\node[below] at (v7) {$v_6$};
\node[below=0.8cm,left=0.2cm] at (v5) {$W_1$};
\end{tikzpicture}\hspace{3cm}
  \begin{tikzpicture}[u/.style={fill=black,minimum size =4pt,ellipse,inner sep=1pt},invisible/.style={circle,draw=none,fill=none,inner sep=0pt,minimum size=0pt},node distance=1.5cm,scale=0.9]

\node[u] (v1) at (0,0){};
\node[u] (v2) at (-1,0.7){};
\node[u] (v3) at (-1,-0.7){};
\node[u] (v5) at (2.4,0){};
\node[u] (v6) at (3.4,0.7) {};
\node[u] (v7) at (3.4,-0.7) {};
\node[u] (v8) at (1.2,0) {};

\draw  (v1) to (v2){};
\draw  (v1) to (v3){};
\draw  (v1) to (v5){};
\draw  (v2) to (v3){};
\draw  (v5) to (v6){};
\draw  (v5) to (v7){};
\draw  (v6) to (v7){};

\node[above] at (v1) {$v_3$};
\node[below] at (v3) {$v_1$};
\node[above] at (v2) {$v_2$};
\node[above] at (v5) {$v_4$};
\node[above] at (v6) {$v_5$};
\node[below] at (v7) {$v_6$};
\node[above] at (v8) {$v_7$};
\node[below=0.8cm,left=0.5cm] at (v5) {$W_2$};
\end{tikzpicture}
\caption{Subgraph $W_1$ and $W_2$. } \label{subgraph-W}
\end{center}
\end{figure}

Let $W_1' = G - \{v_1, v_2, v_3, v_4, v_5, v_6\}$.  Then since $G$ is a minimal counterexample to Theorem \ref{main-thm}, the square of $W_1'$ has a proper $L$-coloring $\phi$ such that $\phi(v) \in L(v)$ for each vertex $v \in V(W_1')$.

Now, for each $v_i \in V(W_1)$, we define \[
L_{W_1}(v_i) = L(v_i) \setminus \{\phi(x) : xv_i \in E(G^2) \mbox{ and } x \notin V(W_1)\}.
\]
Note that for any $x,y \in W_1$, if $d_{W_1}(x,y)\geq 3$, then $d_G(x,y)\geq 3$
since $G$ has neither 4-cycles nor 5-cycles.
Then, we have the following
\[|L_{W_1}(v_1)| \geq 3, |L_{W_1}(v_2)| \geq 3, |L_{W_1}(v_3)| \geq 5, |L_{W_1}(v_4)| \geq 5,
|L_{W_1}(v_5)| \geq 3, |L_{W_1}(v_6)| \geq 3.\]

Next, we will show that
$W_1^2$ admits a proper coloring from the list $L_{W_1}$.
If $|L_{W_1}(v_6)| =3$, then color $v_3$ by a color $\alpha \in L_{W_1}(v_3)\setminus L_{W_1}(v_6)$; otherwise, choose $\alpha \in L_{W_1}(v_3)$ arbitrarily.
Then, we have that $|L_{W_1}(v_6) \setminus \{\alpha\}| \geq 3$.
Next, we can greedily color $v_1, v_2, v_4, v_5, v_6$ in order.
Thus $W_1^2$ admits a proper coloring from the list $L_{W_1}$. So,
$G^2$ has a proper coloring from the list $L$, which is a contradiction.

\bigskip
\noindent {\bf Case 2}: Distance of two 3-cycles is 2.

Suppose that $G$ has $W_2$ in Figure \ref{subgraph-W} as a subgraph, and denote $V(W_2) = \{v_1, v_2, v_3, v_4, v_5, v_6, v_7\}$.
Note that the distance of the two 3-cycles $v_1 v_2 v_3 v_1$ and $v_4v_5v_6v_4$ is 2.

Let $W_2' = G - \{v_1, v_2, v_3, v_4, v_5, v_6, v_7\}$.  Then since $G$ is a minimal counterexample to Theorem \ref{main-thm}, the square of $W_2'$ has a proper coloring $\phi$ such that $\phi(v) \in L(v)$ for each vertex $v \in V(W_2')$.

Now, for each $v_i \in V(W_2)$, we define \[
L_{W_2}(v_i) = L(v_i) \setminus \{\phi(x) : xv_i \in E(G^2) \mbox{ and } x \notin V(W_2)\}.
\]
Note that for any $x,y \in W_2$, if $d_{W_2}(x,y)\geq 3$, then $d_G(x,y)\geq 3$
since $G$ has neither 4-cycles nor 5-cycles.
Then, we have the following
\[|L_{W_2}(v_1)| \geq 3, |L_{W_2}(v_2)| \geq 3, |L_{W_2}(v_3)| \geq 4, |L_{W_2}(v_4)| \geq 4,
|L_{W_2}(v_5)| \geq 3, |L_{W_2}(v_6)| \geq 3, |L_{W_2}(v_7)| \geq 4.\]

Next, we will show that
$W_2^2$ admits a proper coloring from the list $L_{W_2}$.
If $|L_{W_2}(v_6)| =3$, then color $v_7$ by a color $\beta \in L_{W_2}(v_7)\setminus L_{W_2}(v_6)$; otherwise, choose $\beta \in L_{W_2}(v_7)$ arbitrarily.
Note that $|L_{W_2}(v_6) \setminus \{\beta\}| \geq 3$.
And then greedily color $v_1, v_2, v_3, v_4, v_5, v_6$ in order.
So, $G^2$ has a proper coloring from the list $L$, which is a contradiction.

Thus by Case 1 and Case 2, the distance between any two 3-cycles is at least 3.
\end{proof}

Next, we will show that the distance between a 3-cycle and a 2-vertex is at least 4.

\begin{lemma} \label{lem-distance-cycle-vertex}
The distance between a 3-cycle  and a 2-vertex is at least 4 in $G$.
\end{lemma}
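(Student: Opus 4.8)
The plan is to argue by contradiction from the minimality of $G$, in the same style as Lemmas~\ref{C3-C6} and~\ref{lem-distance-cycle}. Suppose a $3$-cycle $T=v_1v_2v_3v_1$ is at distance at most $3$ from a $2$-vertex $w$. By Lemma~\ref{C3-V2} each $v_i$ is a $3$-vertex, hence has a unique neighbor $u_i$ off $T$, and by Lemma~\ref{basic-lemma}(b) every vertex other than $w$ lying within distance $3$ of $w$ is a $3$-vertex. Taking a shortest $T$--$w$ path and using that $G$ has no $4$- or $5$-cycle, this path meets $T$ only at its endpoint (say $v_1$) and is chordless, so I would split into three cases according to whether $d_G(T,w)$ equals $1$, $2$, or $3$. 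In each case let $H$ be the union of $T$ with this path, so $|V(H)|\in\{4,5,6\}$; delete $V(H)$, take the $L$-coloring $\phi$ of $(G-V(H))^2$ provided by minimality, and set $L_H(v)=L(v)\setminus\{\phi(x):xv\in E(G^2),\,x\notin V(H)\}$. As in the earlier lemmas, the property that $d_H(x,y)\ge 3$ implies $d_G(x,y)\ge 3$ (because there are no short cycles) guarantees that any proper $L_H$-coloring of $H^2$ combines with $\phi$ into a proper $L$-coloring of $G^2$; one checks here that each vertex of $H$ has at most one neighbor in $G-V(H)$, so no clash arises among the deleted vertices' outside neighbors.

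A direct count of $G^2$-neighbors outside $H$, using subcubicity, then yields the residual list sizes. When $d_G(T,w)=1$ we have $H=\{v_1,v_2,v_3,w\}$, $H^2=K_4$, and sizes $|L_H(v_1)|,|L_H(w)|\ge 4$, $|L_H(v_2)|,|L_H(v_3)|\ge 3$; when $d_G(T,w)=2$ we have $H=\{v_1,v_2,v_3,u_1,w\}$, $H^2$ is $K_5$ minus the two edges $v_2w$ and $v_3w$, and sizes $|L_H(v_1)|\ge 4$, $|L_H(v_2)|,|L_H(v_3)|,|L_H(u_1)|,|L_H(w)|\ge 3$. Both are colored greedily by coloring the smallest-list vertices first: for $d_G(T,w)=1$ color $v_2,v_3$ and then $v_1,w$, and for $d_G(T,w)=2$ color the $K_4$ on $\{v_1,v_2,v_3,u_1\}$ first and then $w$ (which, having $H^2$-degree $2$, retains a color). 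In each of these cases every vertex has more available colors than already-colored neighbors.

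The essential difficulty is the case $d_G(T,w)=3$, where $H=\{v_1,v_2,v_3,u_1,s,w\}$ with path $v_1u_1sw$. The counts give $|L_H(v_1)|\ge 4$ and $|L_H(v_2)|,|L_H(v_3)|,|L_H(u_1)|,|L_H(w)|\ge 3$, but the middle vertex satisfies only $|L_H(s)|\ge 2$, and in $H^2$ the vertex $s$ is adjacent to both $v_1$ and $u_1$, which lie in the $K_4$ on $\{v_1,v_2,v_3,u_1\}$. Coloring this $K_4$ first can use up $L_H(s)$ exactly, so pure greedy fails; this is the main obstacle. I would resolve it with a short argument in the spirit of Case~2 of Lemma~\ref{C3-C6}. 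If $|L_H(s)|\ge 3$, color the $K_4$ on $\{v_1,v_2,v_3,u_1\}$ first, then $s$ (avoiding only the colors of $v_1$ and $u_1$), and finally $w$. If $|L_H(s)|=2$, first color $u_1$ with a color outside $L_H(s)$ (possible since $|L_H(u_1)|\ge 3$), next color the triangle $v_1v_2v_3$, then color $s$, which still keeps a color because only $v_1$ can delete one from its size-$2$ list, and finally color $w$. In every case $H^2$ is $L_H$-colorable, contradicting the choice of $G$ and proving the lemma.
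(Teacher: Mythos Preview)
Your proof is correct and follows essentially the same approach as the paper: split into cases by the distance $d\in\{1,2,3\}$, delete the triangle together with the shortest path to $w$, invoke minimality to $L$-color the square of the remainder, count residual list sizes, and finish by a greedy argument with one ``save a color'' trick in the hardest case. The only differences are tactical: in Case~$d=3$ the paper first colors the triangle vertex on the path so as to protect another triangle vertex's list and then runs a single greedy sweep $v_5,v_4,w,v_2,v_1$, whereas you protect $L_H(s)$ by choosing $u_1$'s color outside it; both devices serve the same purpose and either suffices.
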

\begin{proof}
Suppose to the contrary that $G$ has a 3-cycle $v_1v_2v_3v_1$ and a 2-vertex $w$ whose distance (denoted by $d$) is at most 3.
Note that $w$ is not on the 3-cycle $v_1v_2v_3v_1$ by Lemma \ref{C3-V2}.

\medskip
\noindent {\bf Case 1}: $d=1$.

Suppose that $G$ has $Q_1$ in Figure \ref{subgraph-Q} as a subgraph, and denote $V(Q_1) = \{v_1, v_2, v_3, w\}$.
Note that the distance of the 3-cycle $v_1 v_2 v_3 v_1$ and the 2-vertex $w$ is 1.

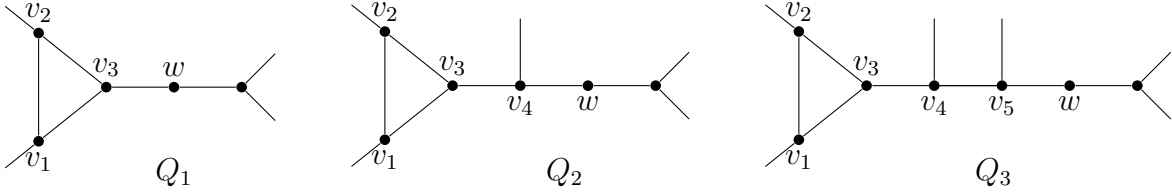
\begin{figure}[htbp]
\begin{center}
  \begin{tikzpicture}[u/.style={fill=black,minimum size =4pt,ellipse,inner sep=1pt},invisible/.style={circle,draw=none,fill=none,inner sep=0pt,minimum size=0pt},node distance=1.5cm,scale=0.9]

\node[u] (v1) at (0,0){};
\node[u] (v2) at (-1,0.8){};
\node[u] (v3) at (-1,-0.8){};
\node[u] (v4) at (1,0){};
\node[u] (v5) at (2,0){};
\node[invisible] (v6) at (2.5,0.5) {};  
\node[invisible] (v7) at (2.5,-0.5) {};  
\node[invisible] (v8) at (-1.5,1.2) {};  
\node[invisible] (v9) at (-1.5,-1.2) {};

\draw  (v1) to (v2){};
\draw  (v1) to (v3){};
\draw  (v1) to (v4){};
\draw  (v2) to (v3){};
\draw  (v2) to (v8){};
\draw  (v9) to (v3){};
\draw  (v5) to (v4){};
\draw  (v5) to (v6){};
\draw  (v5) to (v7){};

\node[above] at (v1) {$v_3$};
\node[below] at (v3) {$v_1$};
\node[above] at (v2) {$v_2$};
\node[above] at (v4) {$w$};
\node[below=0.8cm] at (v4) {$Q_1$};
\end{tikzpicture}\hspace{1cm}\begin{tikzpicture}[u/.style={fill=black,minimum size =4pt,ellipse,inner sep=1pt},invisible/.style={circle,draw=none,fill=none,inner sep=0pt,minimum size=0pt},node distance=1.5cm,scale=0.9]

\node[u] (v1) at (0,0){};
\node[u] (v2) at (-1,0.8){};
\node[u] (v3) at (-1,-0.8){};
\node[u] (v4) at (2,0){};
\node[u] (v5) at (3,0){};
\node[u] (v10) at (1,0){};
\node[invisible] (v6) at (3.5,0.5) {};  
\node[invisible] (v7) at (3.5,-0.5) {};  
\node[invisible] (v8) at (-1.5,1.2) {};   
\node[invisible] (v9) at (-1.5,-1.2) {};  
\node[invisible] (v11) at (1,1) {};

\draw  (v1) to (v2){};
\draw  (v1) to (v3){};
\draw  (v1) to (v4){};
\draw  (v2) to (v3){};
\draw  (v2) to (v8){};
\draw  (v9) to (v3){};
\draw  (v5) to (v4){};
\draw  (v5) to (v6){};
\draw  (v5) to (v7){};
\draw  (v10) to (v11){};

\node[above] at (v1) {$v_3$};
\node[below] at (v3) {$v_1$};
\node[above] at (v2) {$v_2$};
\node[below] at (v4) {$w$};
\node[below] at (v10) {$v_4$};
\node[below=1.15cm,right=0.2cm] at (v10) {$Q_2$};
\end{tikzpicture}\hspace{1cm}\begin{tikzpicture}[u/.style={fill=black,minimum size =4pt,ellipse,inner sep=1pt},invisible/.style={circle,draw=none,fill=none,inner sep=0pt,minimum size=0pt},node distance=1.5cm,scale=0.9]

\node[u] (v1) at (0,0){};
\node[u] (v2) at (-1,0.8){};
\node[u] (v3) at (-1,-0.8){};
\node[u] (v4) at (3,0){};
\node[u] (v5) at (4,0){};
\node[u] (v10) at (1,0){};
\node[u] (v12) at (2,0){};
\node[invisible] (v13) at (2,1){};
\node[invisible] (v6) at (4.5,0.5) {};
\node[invisible] (v7) at (4.5,-0.5) {};
\node[invisible] (v8) at (-1.5,1.2) {};
\node[invisible] (v9) at (-1.5,-1.2) {};
\node[invisible] (v11) at (1,1) {};

\draw  (v1) to (v2){};
\draw  (v1) to (v3){};
\draw  (v1) to (v4){};
\draw  (v2) to (v3){};
\draw  (v2) to (v8){};
\draw  (v9) to (v3){};
\draw  (v5) to (v4){};
\draw  (v5) to (v6){};
\draw  (v5) to (v7){};
\draw  (v10) to (v11){};
\draw  (v10) to (v12){};
\draw  (v12) to (v13){};
\node[above] at (v1) {$v_3$};
\node[below] at (v3) {$v_1$};
\node[above] at (v2) {$v_2$};
\node[below] at (v4) {$w$};
\node[below] at (v10) {$v_4$};
\node[below] at (v12) {$v_5$};
\node[below=1.15cm,right=0.4cm] at (v10) {$Q_3$};
\end{tikzpicture}
\caption{Subgraph $Q_1$, $Q_2$, and $Q_3$ } \label{subgraph-Q}
\end{center}

\end{figure}

Let $Q_1' = G - \{v_1, v_2, v_3, w\}$.  Then since $G$ is a minimal counterexample to Theorem \ref{main-thm}, the square of $Q_1'$ has a proper coloring $\phi$ such that $\phi(v) \in L(v)$ for each vertex $v \in V(Q_1')$.

Now, for each $v \in V(Q_1)$, we define \[
L_{Q_1}(v) = L(v) \setminus \{\phi(x) : xv \in E(G^2) \mbox{ and } x \notin V(Q_1)\}.
\]
Note that for any $x,y \in Q_1$, if $d_{Q_1}(x,y)\geq 3$, then $d_G(x,y)\geq 3$
since $G$ has neither 4-cycles nor 5-cycles.
Then, we have the following
\[|L_{Q_1}(v_1)| \geq 3, |L_{Q_1}(v_2)| \geq 3, |L_{Q_1}(v_3)| \geq 4, |L_{Q_1}(w)| \geq 4.\]

In this case, greedily color $v_1, v_2, v_3, w$ in order.
Then the square of $Q_1$ has a proper coloring from the list $L_{Q_1}$.
So $G^2$ has a proper coloring from the list $L$, which is a contradiction.

\medskip
\noindent {\bf Case 2}: $d=2$.

Suppose that $G$ has $Q_2$ in Figure \ref{subgraph-Q}  as a subgraph, and denote $V(Q_2) = \{v_1, v_2, v_3, v_4,  w\}$.
Note that the distance of the 3-cycle $v_1 v_2 v_3 v_1$ and the 2-vertex $w$ is 2.

Let $Q_2' = G - \{v_1, v_2, v_3, v_4, w\}$.  Then since $G$ is a minimal counterexample to Theorem \ref{main-thm}, the square of $Q_2'$ has a proper coloring $\phi$ such that $\phi(v) \in L(v)$ for each vertex $v \in V(Q_2')$.

Now, for each $v \in V(Q_2)$, we define \[
L_{Q_2}(v) = L(v) \setminus \{\phi(x) : xv \in E(G^2) \mbox{ and } x \notin V(Q_2)\}.
\]
Note that for any $x,y \in Q_2$, if $d_{Q_2}(x,y)\geq 3$, then $d_G(x,y)\geq 3$
since $G$ has neither 4-cycles nor 5-cycles.
Then, we have the following
\[|L_{Q_2}(v_1)| \geq 3, |L_{Q_2}(v_2)| \geq 3, |L_{Q_2}(v_3)| \geq 4, |L_{Q_2}(v_4)| \geq 3, |L_{Q_2}(w)| \geq 3.\]

In this case, color $v_3$ by a color $c \in L_{Q_2}(v_3)$ such that $|L_{Q_2}(v_1)\setminus \{c\}|\geq 3$.  And then
greedily color $w, v_4, v_2, v_1$ in order.
Then the square of $Q_2$ has a proper coloring from the list $L_{Q_2}$.
So $G^2$ has a proper coloring from the list $L$, which is a contradiction.

\medskip
\noindent {\bf Case 3}: $d=3$.

Suppose that $G$ has $Q_3$ in Figure \ref{subgraph-Q}  as a subgraph, and denote $V(Q_3) = \{v_1, v_2, v_3, v_4, v_5,  w\}$.
Note that the distance of the 3-cycle $v_1 v_2 v_3 v_1$ and the 2-vertex $w$ is 3.

Let $Q_3' = G - \{v_1, v_2, v_3, v_4, v_5, w\}$.  Then since $G$ is a minimal counterexample to Theorem \ref{main-thm}, the square of $Q_3'$ has a proper coloring $\phi$ such that $\phi(v) \in L(v)$ for each vertex $v \in V(Q_3')$.

Now, for each $v \in V(Q_3)$, we define \[
L_{Q_3}(v) = L(v) \setminus \{\phi(x) : xv \in E(G^2) \mbox{ and } x \notin V(Q_3)\}.
\]
Note that for any $x,y \in Q_3$, if $d_{Q_3}(x,y)\geq 3$, then $d_G(x,y)\geq 3$
since $G$ has neither 4-cycles nor 5-cycles.
Then, we have the following
\[|L_{Q_3}(v_1)| \geq 3, |L_{Q_3}(v_2)| \geq 3, |L_{Q_3}(v_3)| \geq 4, |L_{Q_3}(v_4)| \geq 3,
|L_{Q_3}(v_5)| \geq 2, |L_{Q_3}(w)| \geq 3.\]
We can color $v_3$ with a color $\alpha \in L_{Q_3}(v_3)$
so that $|L_{Q_3}(v_1) \setminus \{\alpha\}| \geq 3$.
Then we can greedily color $v_5$, $v_4$, $w$, $v_2$, $v_1$ in order.
Thus, the square of $Q_3$ has a proper coloring from the list $L_{Q_3}$.
So $G^2$ has a proper coloring from the list $L$, which is a contradiction.
\end{proof}

Now we state a key lemma which is important in discharging part.

\begin{lemma} \label{key-lem-discharging}
For each integer $\ell \geq 7$, let $C_{\ell}$ be a face of $G$ of length $\ell$.  Then the sum of the number of 3-cycles adjacent to $C_{\ell}$ and the number of 2-vertices on $C_{\ell}$ is at most ${\displaystyle \big\lfloor \frac{\ell}{4} \big\rfloor }$.
\end{lemma}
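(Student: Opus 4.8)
The plan is to place every object we must count---each $2$-vertex lying on $C_{\ell}$ and each triangular face (a $3$-cycle) sharing an edge with $C_{\ell}$---as an \emph{anchor} on the boundary of $C_{\ell}$, and then show that any two consecutive anchors are separated by at least $4$ edges along the boundary. Since the boundary has length $\ell$, the gaps between consecutive anchors sum to $\ell$, so at most $\ell/4$ anchors can fit, and being an integer the count is at most $\lfloor \ell/4 \rfloor$. First I would fix notation: write the boundary of $C_{\ell}$ as a closed walk $u_1 u_2 \cdots u_{\ell} u_1$ of length $\ell$. The only graph-theoretic input I need from this walk is the inequality $d_G(u_a, u_b) \le (\text{length of the forward arc from } u_a \text{ to } u_b)$, since that arc is a walk between the two vertices; this holds regardless of whether the boundary is a simple cycle.

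Next I would set up the anchors and check they are distinct. A $2$-vertex on $C_{\ell}$ is its own anchor $u_k$. A $3$-cycle adjacent to $C_{\ell}$ shares exactly one boundary edge $u_i u_{i+1}$ (it cannot share two, and by Lemma \ref{lem-distance-cycle} two adjacent triangles share no vertex), and I assign it the anchor index $i$, while recording that it ``occupies'' both $u_i$ and $u_{i+1}$. The anchors are pairwise distinct: by Lemma \ref{C3-V2} no triangle vertex is a $2$-vertex, so triangle anchors and $2$-vertex anchors never coincide, and distinct adjacent triangles use distinct boundary edges. Hence the number of anchors equals the sum we are bounding.

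The heart of the argument is verifying that consecutive anchors (in the cyclic order induced by their positions) differ by at least $4$, which I would do by translating each distance lemma into an index inequality. If $u_k$ and $u_q$ are consecutive $2$-vertices, Lemma \ref{basic-lemma}(b) gives $d_G(u_k,u_q)\ge 4$, so the forward gap $q-k\ge 4$. If a $2$-vertex $u_k$ is followed by a triangle on $u_q u_{q+1}$, Lemma \ref{lem-distance-cycle-vertex} gives $d_G(u_k,u_q)\ge 4$, again $q-k\ge 4$. If a triangle on $u_p u_{p+1}$ is followed by a $2$-vertex $u_q$, the same lemma applied to $u_{p+1}$ gives $d_G(u_{p+1},u_q)\ge 4$, so $q-p\ge 5$. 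Finally, if a triangle on $u_p u_{p+1}$ is followed by a triangle on $u_q u_{q+1}$, Lemma \ref{lem-distance-cycle} gives $d_G(u_{p+1},u_q)\ge 3$, so $q-p\ge 4$. In every case the cyclic gap is at least $4$, and summing around the boundary yields $4(a+b)\le \ell$ where $a+b$ is the required quantity, giving $a+b\le \lfloor \ell/4\rfloor$.

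The step I expect to be most delicate is making these ``distance $\Rightarrow$ gap'' translations airtight, because the two object types have different widths (a $2$-vertex occupies a single position, a triangle occupies an edge) while the governing distances are non-uniform: $4$ between a $2$-vertex and anything, but only $3$ between two triangles. The binding case is triangle-to-triangle, where the weaker bound $3$ is exactly offset by the triangle's occupying two positions, so the gap still comes out as $4$; there I would take care to confirm that a following anchor cannot sit at index $p+1$, which it cannot, since $u_{p+1}$ is a degree-$3$ triangle vertex and a triangle anchored at $p+1$ would meet the first triangle, contradicting Lemma \ref{lem-distance-cycle}. The remaining technical point is the boundary structure when $G$ fails to be $2$-connected; I would dispatch this by working with the facial closed walk throughout and relying only on the ``graph distance $\le$ forward walk length'' estimate, so that no appeal to a simple boundary cycle is needed.
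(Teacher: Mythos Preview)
Your proposal is correct and is precisely the argument the paper has in mind: the paper's proof of this lemma is the single sentence ``The result is derived from Lemma~\ref{basic-lemma}(b), Lemma~\ref{lem-distance-cycle} and Lemma~\ref{lem-distance-cycle-vertex},'' and your anchor-and-gap analysis is exactly how one cashes that sentence out. The case split showing each cyclic gap is at least~$4$---with the triangle--triangle case $q-p\ge 4$ coming from $d_G(u_{p+1},u_q)\ge 3$---is the intended calculation.
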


\begin{proof}
The result is derived from Lemma \ref{basic-lemma}(b), Lemma \ref{lem-distance-cycle} and Lemma \ref{lem-distance-cycle-vertex}.
\end{proof}


Now we prove the main theorem.

\medskip
\noindent {\bf Theorem \ref{main-thm}.}
If $G$ is a subcubic planar graph without 4-cycles and 5-cycles, then $\chi_{\ell}(G^2) \leq 7$.
\begin{proof}
Let $G$ be a minimal counterexample to the theorem and let $G$ be a plane graph drawn on the plane without crossing edges.  Let $F(G)$ be the set of faces of $G$. For a face $C \in F(G)$, let $\ell(C)$ be the length of $C$.

We assign $2d(x)-6$ to each vertex $x \in V(G)$ and $\ell(x) - 6$ for each face $x \in F(G)$ as an original charge function $\omega(x)$ of $x$.
According to Euler's formula $|V(G)| - |E(G)| + |F(G)| = 2$,
we have
\begin{equation*} \label{eqn1}
\sum_{x\in V(G)\cup F(G)}\omega(x) = \sum_{v\in V(G)}(2d(v)-6)+\sum_{f\in F(G)}(\ell(f)-6) = -12.
\end{equation*}
We next design some discharging rules to redistribute charges along the graph with conservation of the total charge. Let $\omega'(x)$ be the charge of $x \in V(G)\cup F(G)$ after the discharge procedure. Thus, ${\displaystyle \sum_{x\in V(G)\cup F(G)}\omega(x)=\sum_{x\in V(G)\cup F(G)}\omega'(x)}$. Next, we will show that $\omega'(x)\ge 0$ for all $x\in V(G)\cup F(G)$, which leads to the following contradiction
\[
0\le \sum_{x\in V(G)\cup F(G)}\omega'(x)=\sum_{x\in V(G)\cup F(G)}\omega(x) = \sum_{v\in V(G)}(2d(v)-6)+\sum_{f\in F(G)}(\ell(f)-6) = -12.
\]

\medskip
\noindent {\bf The discharging rules:}
\begin{enumerate}[(R1)]
\item If a 2-vertex $u$ is on a face $C$ with $\ell(C)\geq 7$, then $C$ gives charge 1 to $u$.

\item If a 3-face $C_3$ is adjacent to a face $C$ with $\ell(C)\geq 7$, then $C$ gives charge 1 to $C_3$.
\end{enumerate}
\medskip

Next, we will show that the new charge $\omega'(x) \geq 0$ for every $x \in V(G) \cup F(G)$.

\medskip
\noindent
{\bf Case 1:} Let $x \in V(G)$.

Observe that $G$ has no 1-vertex by Lemma~\ref{basic-lemma} (a).
If $d(x)  = 2$, then  $\omega(x)  = -2$.
Note that every 2-vertex is incident to two faces by Lemma \ref{basic-lemma} (c).
Also note that both incident faces of $x$ have length at least 7
by Lemma \ref{lemma-KL} and Lemma \ref{C3-V2}.
By (R1),  $x$ receives charge 1 from each of its incident faces.  So, $\omega'(x) = \omega(x)+1\times 2=0$.
If $d(x) = 3$, then $\omega'(x) = \omega(x) = 0$.

\medskip
\noindent {\bf Case 2:} Let $x \in F(G)$.

Here we denote $x$ by a face $C$. Recall that $G$ has neither a 4-cycle nor a 5-cycle.
If $\ell(C)= 3$, then $C$ is adjacent to three faces of length at least 7 by Lemma \ref{C3-V2} and Lemma \ref{C3-C6}. By (R2), $C$ receives charge 1 from each of its adjacent faces. So, $\omega'(C) = \omega(C)+1\times 3=0$.
If $\ell(C)= 6$, then $\omega'(C)  = \omega(C) = 0$.

Now, we consider the case when $\ell(C)\ge 7$.
By Lemma \ref{key-lem-discharging}, the sum of the number of 2-vertices on $C$ and the number of 3-faces adjacent to $C$ is at most $\lfloor \frac{\ell(C)}{4} \rfloor$.
So by (R1) and (R2), $\omega'(C) \geq \ell(C) - 6 - \lfloor \frac{\ell(C)}{4} \rfloor \geq 0$.

\medskip
Hence, by Cases 1 and 2, we have that $\omega'(x) \geq 0$ for every $x \in V(G) \cup F(G)$.  This completes the proof of Theorem \ref{main-thm}.
\end{proof}

\section{Future work}
We proved that $\chi_{\ell}(G^2) \leq 7$ if $G$ is a subcubic planar graph without 4-cycles and 5-cycles. But, it is unknown whether Question \ref{CK-question} is true or not.  So, it is interesting to answer Question \ref{CK-question} completely.
Or as a weaker version, we can ask the following problem.

\begin{problem}
Is it true that $\chi_{\ell}(G^2) \leq 7$ if $G$ is a subcubic planar graph of girth at least 4?
\end{problem}

On the other hand, we can consider an extension to DP-coloring version.
For detailed information on DP-coloring, one may refer to \cite{JKZ}.

\begin{problem}
Is it true that $\chi_{DP}(G^2) \leq 7$ if $G$ is a subcubic planar graph?  Or, is there a subcubic planar graph $G$ such that $\chi_{DP}(G^2) > 7$?
\end{problem}

\section*{Acknowledgments}
The third author thanks Xuding Zhu for his support and hospitality while he was visiting Zhejiang Normal University in 2025.
The first author is supported by National Natural Science Foundation of China (Grant No. 11801522).
This paper was written as part of Konkuk University's research support program for its faculty on sabbatical leave in 2025 (S.-J. Kim).



\end{document}